%-----------------------------------------------------------------------
% Beginning of proc-l-template.tex
%-----------------------------------------------------------------------
%
%     This is a topmatter template file for PROC for use with AMS-LaTeX.
%
%     Templates for various common text, math, and figure elements are
%     given following the \end{document} line.
%
%%%%%%%%%%%%%%%%%%%%%%%%%%%%%%%%%%%%%%%%%%%%%%%%%%%%%%%%%%%%%%%%%%%%%%%%

%     Remove any commented or uncommented macros you do not use.

\documentclass{proc-l}

%     If you need symbols beyond the basic set, uncomment this command.
%\usepackage{amssymb}

%     If your article includes graphics, uncomment this command.
%\usepackage{graphicx}

%     If the article includes commutative diagrams, ...
%\usepackage[cmtip,all]{xy}

\usepackage[hidelinks]{hyperref}

%     Update the information and uncomment if AMS is not the copyright
%     holder.
%\copyrightinfo{2009}{American Mathematical Society}

\newtheorem{theorem}{Theorem}[section]
\newtheorem{lemma}{Lemma}[section]
\newtheorem{corollary}{Corollary}[section]

\theoremstyle{definition}

\theoremstyle{remark}
\newtheorem{remark}{Remark}[section]

\numberwithin{equation}{section}

\begin{document}

\title[Sums of cusp form coefficients twisted with additive characters]{On sums of Fourier coefficients of cusp forms twisted with additive characters}

\author{Zihao Liu}
\address{Department of Mathematics, University College London}
\curraddr{}
\email{zihao.liu.22@ucl.ac.uk}
\thanks{}

\subjclass[2020]{Primary 11F30, 11N37; Secondary 11M41, 11P21}

\date{}

\dedicatory{}

%    "Communicated by" -- provide editor's name; required.
%\commby{}

\begin{abstract}
    When $a_n$ is the $n$'th coefficient of some holomorphic cusp form, we prove a variety of Omega results for the twisted sum $\sum_{n\le x}a_ne^{2\pi in\alpha}$ and discuss their applications to the Ramanujan $\tau$-function and sums of $a_n$ over arithmetic progressions.
\end{abstract}

\maketitle

    \section{Introduction}\label{sc-intro}
    Let $\Gamma=SL_2(\mathbb Z)$. $f$ is said to be a holomorphic cusp form of weight $k>0$ on $\Gamma$ if it is holomorphic on $\mathbb H\cup\{\infty\}$ such that $f(\infty)=0$ and for any $\gamma=\begin{pmatrix}a&b\\c&d\end{pmatrix}\in\Gamma$ with $c>0$, there exists some number $A(\gamma)$ of modulus unity satisfying
    \begin{equation}
        \label{eqn-modular}
        f\left(a\tau+b\over c\tau+d\right)=A(\gamma)[-i(c\tau+d)]^kf(\tau).
    \end{equation}
    Assume also that $f$ can be expanded at $\infty$ into
    \begin{equation}
        f(\tau)=\sum_{n\ge1}a_ne^{2\pi in\tau}
    \end{equation}
    such that not all the $a_n$'s are zero, so $f(\tau+1)=f(\tau)$, meaning that $A(\gamma)$ is unchanged if $\gamma$ is replaced with $\gamma'=\begin{pmatrix}1&m\\0&1\end{pmatrix}\gamma\begin{pmatrix}1&n\\0&1\end{pmatrix}$ for any integral $m,n$. As a result, knowing either two of $a,b,c,d$ in $\gamma$ is sufficient to determine the value of $A(\gamma)$.

    In this paper, we derive Omega results for a twisted partial sum over $a_n$:
    \begin{equation}
        S(x,\alpha)=\sum_{n\le x}a_ne^{2\pi in\alpha}.
    \end{equation}
    Specifically, we show that
    \begin{theorem}
        \label{th-omega}
        When $\alpha\in\mathbb Q$, there is $S(x,\alpha)=\Omega(x^{\frac k2-\frac14})$.
    \end{theorem}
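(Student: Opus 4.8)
The plan is to derive the claimed $\Omega$-estimate from a truncated Voronoi-type summation formula for the additive twist, combined with a mean-square argument. Write $\alpha=a/q$ in lowest terms; the case $q=1$ is the classical sum $\sum_{n\le x}a_n$. The analytic input is the functional equation of the additively twisted Dirichlet series $L_f(s,a/q)=\sum_{n\ge1}a_ne^{2\pi ina/q}n^{-s}$, a classical consequence of \eqref{eqn-modular} applied to a matrix of $\Gamma$ sending $\infty$ to $a/q$: $L_f(s,a/q)$ is entire of finite order and satisfies a functional equation relating $s$ to $k-s$, with $\Gamma$-factor $\Gamma(s)$, conductor $q$, and dual coefficients $a_me^{-2\pi im\bar a/q}$, where $a\bar a\equiv1\pmod q$. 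Feeding this into Perron's formula and shifting the contour gives, for $x^{1/2}\le N\le x$,
\[
S(x,a/q)=c_{a,q}\,x^{\frac k2-\frac14}\sum_{m\le N}\frac{a_me^{-2\pi im\bar a/q}}{m^{\frac k2+\frac14}}\cos\!\left(\frac{4\pi\sqrt{mx}}{\sqrt q}-\theta\right)+O_{a,q}\!\left(x^{\frac k2+\varepsilon}N^{-\frac12}\right),
\]
for a nonzero constant $c_{a,q}$ and a real phase $\theta$ (if $f$ is not self-dual one gets a sum of two such exponential series, which changes nothing below).

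Taking $N\asymp x^{3/4}$, I would estimate $\int_X^{2X}|S(x,a/q)|^2\,dx$ by expanding the square of the main term. The substitution $u=\sqrt x$ followed by one integration by parts gives $\int_X^{2X}x^{k-\frac12}\cos(\gamma\sqrt x+\phi)\,dx\ll X^k/|\gamma|$ whenever $\gamma\ne0$. On the diagonal $m=m'$ the constant part of $\cos^2$ survives and contributes $(c_k'+o(1))X^{k+\frac12}\sum_{m\le N}|a_m|^2m^{-k-\frac12}$ with $c_k'>0$; by the Rankin--Selberg bound $\sum_{m\le Y}|a_m|^2\ll Y^k$ the series $\sum_m|a_m|^2m^{-k-\frac12}$ converges to a limit which is strictly positive (not all $a_m$ vanish), so the diagonal is $\gg_{a,q}X^{k+\frac12}$. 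For $m\ne m'$ the two relevant frequencies differ by $\gg|m-m'|/\sqrt N$, since $|\sqrt m-\sqrt{m'}|=|m-m'|/(\sqrt m+\sqrt{m'})$, and with Deligne's bound $a_m\ll m^{\frac{k-1}2+\varepsilon}$ a routine estimation bounds the off-diagonal total by $\ll_{a,q}X^kN^{\frac12+\varepsilon}=o(X^{k+\frac12})$. The error term contributes $\ll_{a,q}X^{k+1+\varepsilon}N^{-1}=o(X^{k+\frac12})$ to the integral, and the cross term between the main and error terms is handled by Cauchy--Schwarz. Altogether $\int_X^{2X}|S(x,a/q)|^2\,dx\gg_{a,q}X^{k+\frac12}$ (indeed $\sim C_{a,q}X^{k+\frac12}$ with $C_{a,q}>0$).

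The theorem then follows by a standard comparison: if we had $S(x,a/q)=o(x^{\frac k2-\frac14})$, then writing $|S(x,a/q)|^2\le\eta(x)x^{k-\frac12}$ with $\eta(x)\to0$ would give $\int_X^{2X}|S(x,a/q)|^2\,dx\le\big(\sup_{[X,2X]}\eta\big)\int_X^{2X}x^{k-\frac12}\,dx=o(X^{k+\frac12})$, contradicting the lower bound above. Hence $S(x,a/q)=\Omega(x^{\frac k2-\frac14})$.

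The main obstacle is the first step. All that is really needed is \emph{some} truncated Voronoi formula for $S(x,a/q)$ whose error is controlled well enough to be negligible in the second moment for $N$ in a nonempty range $x^{1/2+\varepsilon}\le N\le x^{1-\delta}$; the subsequent oscillation analysis is then routine. An alternative that avoids Voronoi altogether is to quote a general Omega theorem for partial sums of coefficients of Dirichlet series satisfying a functional equation with a single $\Gamma$-factor (of Chandrasekharan--Narasimhan type) and apply it to $L_f(s,a/q)$; I prefer the mean-square route here because it is self-contained and produces the sharp second moment as a byproduct.
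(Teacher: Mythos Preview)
Your approach is sound but genuinely different from the paper's. The paper follows Hardy's Laplace-transform method: it studies $F(s,\alpha)=\sum_{n\ge1}a_ne^{2\pi in\alpha}e^{-s\sqrt n}$, obtains the closed identity \eqref{eqn-Fs2} directly from the modular relation, and reads off $F(\sigma\pm it_n,\alpha)\asymp\sigma^{-k-1/2}$ as $\sigma\to0^+$ whenever $a_n\ne0$ (\autoref{th-Fs}); since partial summation shows that $S(x,\alpha)=o(x^\theta)$ would force $F(\sigma+it,\alpha)=o(\sigma^{-2\theta-1})$ (\autoref{lm-o}), setting $\theta=\tfrac k2-\tfrac14$ yields an immediate contradiction. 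No Voronoi expansion, no second moment, and no Deligne are needed---only Hecke's bound $a_n=O(n^{k/2})$ for the convergence of \eqref{eqn-Fs2}. Your route is the natural extension to rational $\alpha$ of Walker's $\alpha=0$ argument (which the paper cites) and delivers strictly more, namely the sharp mean square $\int_X^{2X}|S(x,\alpha)|^2\,dx\asymp X^{k+1/2}$; the price is the heavier analytic input (a truncated Voronoi formula with controlled remainder, plus Deligne or Rankin--Selberg on average for the off-diagonal). One minor slip: the Voronoi frequency should be $4\pi\sqrt{mx}/q$ rather than $4\pi\sqrt{mx}/\sqrt q$, since the completed twisted $L$-function carries the factor $(q/2\pi)^s\Gamma(s)$; this does not affect the structure of your argument.
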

    If we impose more conditions on $a_n$, we can strengthen this to an $\Omega_\pm$ theorem:
    \begin{theorem}
        \label{th-omegapm}
        For $\gamma=\begin{pmatrix}a&b\\c&d\end{pmatrix}\in\Gamma$, if there exists some $n\in\mathbb N$ such that $\Re[A(\gamma)a_ne^{-2\pi ind/c}]\ne0$, then when $\alpha=a/c$, we have $\Re[S(x,\alpha)]=\Omega_\pm(x^{\frac k2-\frac14})$.
    \end{theorem}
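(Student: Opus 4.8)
The plan is to combine an explicit Voronoi-type expansion of $S(x,a/c)$ with an averaging argument against a carefully chosen \emph{non-negative} test kernel; the Voronoi input is exactly the machinery already needed for \autoref{th-omega}, and the new ingredient is the kernel. First I would record the functional equation of the additively twisted Dirichlet series $L(s,a/c)=\sum_{n\ge1}a_ne^{2\pi ina/c}n^{-s}$. Starting from $(2\pi)^{-s}\Gamma(s)L(s,a/c)=\int_0^\infty f(iy+a/c)\,y^{s-1}\,dy$, splitting the integral at $y=1/c$ and applying \eqref{eqn-modular} to $\gamma$ — which carries $f$ near the cusp $a/c$ to $f$ near $i\infty$, since for the relevant $\tau'$ one has $c\tau'+d=i/(cy)$ — and then once more to $-\gamma^{-1}\in\Gamma$, one finds that $L(s,a/c)$ is entire and that
\begin{equation*}
L(s,a/c)=A(\gamma)\,c^{\,k-2s}\,(2\pi)^{2s-k}\,\frac{\Gamma(k-s)}{\Gamma(s)}\,L(k-s,-d/c),
\end{equation*}
with $A(-\gamma^{-1})=\overline{A(\gamma)}$ emerging along the way. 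Feeding this into Perron's formula for $S(x,a/c)$, shifting the contour past the pole at $s=0$ (residue $L(0,a/c)$), and evaluating the resulting Mellin--Barnes integral via $\frac1{2\pi i}\int\frac{\Gamma(k-s)}{\Gamma(s+1)}z^s\,ds=z^{k/2}J_k(2\sqrt z)$ produces the truncated Voronoi formula
\begin{equation*}
\Re[S(x,a/c)]=\Re[L(0,a/c)]+C_0\,x^{\frac k2-\frac14}\sum_{m\le M}\frac{b_m}{m^{\frac k2+\frac14}}\cos\Big(\frac{4\pi\sqrt{mx}}{c}-\vartheta\Big)+R_M(x),
\end{equation*}
where $C_0>0$ and $\vartheta\in\mathbb R$ are explicit, $b_m:=\Re[A(\gamma)a_me^{-2\pi imd/c}]$, and $R_M(x)=o(x^{\frac k2-\frac14})$ uniformly for $x\asymp X$ once $M$ is a sufficiently large fixed power of $X$ (bounding the tail of the Bessel asymptotics and the Perron error via $a_m\ll_\varepsilon m^{(k-1)/2+\varepsilon}$).

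The hypothesis of the theorem is exactly that $b_{n_0}\ne0$ for some fixed $n_0\in\mathbb N$, so the $n_0$-th term of this cosine series genuinely survives. Fix $\phi_0\in\{0,\pi\}$ with $b_{n_0}\cos\phi_0=|b_{n_0}|$ and a constant $\eta\in(0,1)$, and for large $Y$ consider
\begin{equation*}
\mathcal I_\pm(Y):=\int_Y^{2Y}\Re[S(x,a/c)]\,x^{-\frac k2+\frac14}\,K_\pm(x)\,dx,\qquad K_\pm(x):=1\pm\eta\cos\Big(\frac{4\pi\sqrt{n_0x}}{c}-\vartheta+\phi_0\Big),
\end{equation*}
the crucial feature being that $K_\pm(x)\ge1-\eta>0$. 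Substituting the Voronoi formula with, say, $M=Y^3$: the contribution of the $m=n_0$ term of the cosine series against the $\pm\eta\cos(\cdots)$ part of $K_\pm$ equals $\pm\tfrac12C_0\eta|b_{n_0}|n_0^{-k/2-1/4}Y+O(\sqrt Y)$, since $\int_Y^{2Y}\cos\theta(x)\cos(\theta(x)+\phi_0)\,dx=\tfrac Y2\cos\phi_0+O(\sqrt Y)$ for $\theta(x)=4\pi\sqrt{n_0x}/c-\vartheta$; the constant term contributes $O(Y^{5/4-k/2})$; and every other term is an oscillatory integral $\int_Y^{2Y}\cos(\mu\sqrt x+\psi)\,dx\ll\sqrt Y/|\mu|$ in which the frequency $\mu$ lies in $\{\tfrac{4\pi}c(\sqrt m\pm\sqrt{n_0}):m\ne n_0\}\cup\{\tfrac{4\pi}c\sqrt m\}$ and hence is bounded away from $0$. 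Since $|b_m|\ll_\varepsilon m^{(k-1)/2+\varepsilon}$ and $|m-n_0|\gg m$ for $m>2n_0$, the off-diagonal sum $\sum_{m\ne n_0}|b_m|m^{-k/2+1/4}|m-n_0|^{-1}$ converges uniformly in $M$, so the whole off-diagonal contribution, together with the $1$-part of $K_\pm$, is $O(\sqrt Y\log Y)$. Therefore $\mathcal I_\pm(Y)=\pm\tfrac12C_0\eta|b_{n_0}|n_0^{-k/2-1/4}Y+O(Y^{1-\delta})$ for some $\delta>0$, which has the sign $\pm$ once $Y$ is large.

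To finish: $K_+\ge0$ and $\mathcal I_+(Y)>0$ while $\int_Y^{2Y}K_+(x)\,dx=Y+O(\sqrt Y)$, so $\sup_{x\in[Y,2Y]}x^{-\frac k2+\frac14}\Re[S(x,a/c)]\ge\mathcal I_+(Y)/\int_Y^{2Y}K_+(x)\,dx\gg1$ uniformly in $Y$; letting $Y\to\infty$ gives $\Re[S(x,a/c)]=\Omega_+(x^{\frac k2-\frac14})$. By symmetry $K_-\ge0$ and $\mathcal I_-(Y)<0$ give $\inf_{x\in[Y,2Y]}x^{-\frac k2+\frac14}\Re[S(x,a/c)]\le\mathcal I_-(Y)/\int_Y^{2Y}K_-(x)\,dx\ll-1$, whence $\Re[S(x,a/c)]=\Omega_-(x^{\frac k2-\frac14})$; together these are the asserted $\Omega_\pm$. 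I expect the main obstacle to be conceptual rather than computational: a single oscillating test function only recovers the two-sided $\Omega$ of \autoref{th-omega}, and what separates $\Omega_+$ from $\Omega_-$ is the demand that $K_\pm$ be \emph{non-negative} — the perturbation $\pm\eta\cos(\cdots)$ of the constant $1$ being just large enough for the $n_0$-diagonal to dominate, yet small enough to keep $K_\pm\ge0$. Everything else — pinning down the truncated Voronoi formula and verifying that all contributions except the $n_0$-diagonal are $o(Y)$ — is routine and runs parallel to the proof of \autoref{th-omega}.
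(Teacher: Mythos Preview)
Your argument is correct, but it follows a genuinely different route from the paper's. The paper never writes down a Voronoi expansion; it works entirely through the exponential generating series $F(s,\alpha)=\sum_{n\ge1}a_ne^{2\pi in\alpha}e^{-s\sqrt n}$ and the asymptotic \autoref{th-Fs}. For $\Omega_+$ one supposes $\Re[S(x,\alpha)]<\varepsilon x^{k/2-1/4}$ for $x>x_0$, so that the integrand in
\[
G(s)=s\int_0^\infty\bigl\{\varepsilon y^{k-1/2}-\Re[S(y^2,\alpha)]\bigr\}e^{-sy}\,dy
\]
is eventually nonnegative; replacing $|e^{-sy}|$ by $e^{-\sigma y}$ then gives $|G(\sigma+it)|\le \varepsilon C\sigma^{-k-1/2}+O(1)$ for every fixed $t$. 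On the other hand $G(s)=\varepsilon\Gamma(k+\tfrac12)s^{1/2-k}-F_1(s)$ with $F_1(s)=\tfrac12[F(s,\alpha)+\overline{F(\overline s,\alpha)}]$, and \autoref{th-Fs} yields $F_1(\sigma+it_{n_0})\sim B^+\Re[A(\gamma)a_{n_0}e^{-2\pi in_0d/c}]c^{-k}\sigma^{-k-1/2}$; since $\varepsilon>0$ is arbitrary this is a contradiction. Your approach instead isolates the $n_0$-frequency by a truncated Voronoi sum and the nonnegative resonator $K_\pm=1\pm\eta\cos(\cdot)$. The paper's method buys economy: no functional equation, Perron formula, or Bessel asymptotics are needed, and once \autoref{th-Fs} is available the proof is a few lines. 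Your method buys explicitness and localization --- it shows the sign changes already occur on every dyadic window $[Y,2Y]$ and adapts to more refined kernels --- at the cost of developing the Voronoi machinery. One contextual correction: the paper's proof of \autoref{th-omega} also goes through $F(s,\alpha)$ rather than a Voronoi formula, so that machinery is genuinely new input in your version, not a reuse.
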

    \autoref{th-omega} and \autoref{th-omegapm} are proved based on the asymptotic behavior of the following generating series
    \begin{equation}
        \label{eqn-Fs}
        F(s,\alpha)=\sum_{n\ge1}a_ne^{2\pi in\alpha}e^{-s\sqrt n}
    \end{equation}
    when $s$ approaches the imaginary axis from the right half plane.
    \begin{theorem}
        \label{th-Fs}
        When $\alpha=a/c$ for some $(a,c)=1$, $\sigma>0$, and $t_n=4\pi\sqrt n/c$, there exists nonzero constants $B^+,B^-$ only depending on $k$ such that as $\sigma\to0^+$, $F(\sigma,\alpha)=O(\sigma)$ and
        \begin{equation*}
            F(\sigma\pm it_n)=
            \begin{cases}
                [1+o(1)]B^\pm A(\gamma)a_ne^{-2\pi ind/c}c^{-k}\sigma^{-k-\frac12}, & a_n\ne 0, \\
                O(1), & a_n=0,
            \end{cases}
        \end{equation*}
        in which $b$ and $d$ are any integers satisfying $\gamma=\begin{pmatrix}a&b\\c&d\end{pmatrix}\in\Gamma$.
    \end{theorem}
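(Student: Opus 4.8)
The plan is to realize $F(s,\alpha)$ as an integral transform of $f$ along a horizontal line just above $\alpha$, apply the transformation law \eqref{eqn-modular} at the cusp $a/c$, and then integrate term by term to produce a closed-form series whose $m$-th summand has a branch point exactly at $s=\pm it_m$. Concretely, I would start from the subordination identity
\begin{equation*}
e^{-s\sqrt n}=\frac{s}{2\sqrt\pi}\int_0^\infty t^{-3/2}e^{-s^2/(4t)-nt}\,dt\qquad(\Re s>0),
\end{equation*}
multiply by $a_ne^{2\pi in\alpha}$, sum over $n$, and interchange sum and integral; by Tonelli this is legitimate whenever $\Re(s^2)>0$, i.e.\ $|\arg s|<\pi/4$, and it gives $F(s,\alpha)=\frac{s}{2\sqrt\pi}\int_0^\infty t^{-3/2}e^{-s^2/(4t)}f\bigl(\alpha+\tfrac{it}{2\pi}\bigr)\,dt$. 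Taking $\alpha=a/c$ and the matrix $\delta=\begin{pmatrix}-d&b\\c&-a\end{pmatrix}\in\Gamma$, which sends $a/c$ to $\infty$, a direct computation gives $\delta\bigl(\tfrac{a}{c}+iy\bigr)=-\tfrac{d}{c}+\tfrac{i}{c^2y}$, so \eqref{eqn-modular} turns the integrand into
\begin{equation*}
f\!\left(\frac{a}{c}+\frac{it}{2\pi}\right)=A(\gamma)\Bigl(\frac{ct}{2\pi}\Bigr)^{-k}\sum_{m\ge1}a_me^{-2\pi imd/c}e^{-4\pi^2m/(c^2t)}.
\end{equation*}

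Substituting this back, merging the two exponentials into $e^{-(s^2/4+4\pi^2m/c^2)/t}$, and integrating term by term via $\int_0^\infty t^{-k-3/2}e^{-A/t}\,dt=\Gamma(k+\tfrac12)A^{-k-1/2}$ — again justified by Tonelli in the sector $|\arg s|<\pi/4$, where each $A=\tfrac{s^2}{4}+\tfrac{4\pi^2m}{c^2}$ has positive real part — and using $4\pi^2m/c^2=t_m^2/4$, I obtain
\begin{equation*}
F(s,\alpha)=c_k\,s\,A(\gamma)c^{-k}\sum_{m\ge1}a_me^{-2\pi imd/c}\,(s^2+t_m^2)^{-k-1/2},
\end{equation*}
where $c_k>0$ depends only on $k$ and the half-integer power is the principal branch (note $s^2+t_m^2\notin(-\infty,0]$ when $\Re s>0$). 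Since $a_m\ll m^{k/2}$ by Hecke's bound and $k\ge12$ for a cusp form on $\Gamma$, the general term is $O(m^{-k/2-1/2})$ uniformly on compacta, so the series converges locally uniformly on $\Re s>0$ and is analytic there; it agrees with $F(s,\alpha)$ on the sector $|\arg s|<\pi/4$, and as $F(s,\alpha)$ is itself holomorphic throughout $\Re s>0$ (it is a convergent generalized Dirichlet series in $\sqrt n$), the two coincide on all of $\Re s>0$, which is exactly where the points $\sigma\pm it_n$ lie.

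It remains to read off the asymptotics as $\sigma\to0^+$. Take $s=\sigma\pm it_n$; every summand with $m\ne n$ stays bounded — the finitely many small $m$ trivially, the tail uniformly by the $a_m$-bound — so only the $m=n$ term can grow, and there $s^2+t_n^2=\sigma(\sigma\pm2it_n)\sim\pm2it_n\,\sigma$, hence $(s^2+t_n^2)^{-k-1/2}\sim(\pm2it_n)^{-k-1/2}\sigma^{-k-1/2}$, while the prefactor tends to $\pm it_n$; collecting the constants (which depend only on $k$) gives $F(\sigma\pm it_n)=[1+o(1)]B^\pm A(\gamma)a_ne^{-2\pi ind/c}c^{-k}\sigma^{-k-1/2}$ when $a_n\ne0$, whereas if $a_n=0$ that term is absent and what remains is bounded near $s=\pm it_n$, giving $O(1)$. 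For real $s=\sigma\to0^+$ one has $(\sigma^2+t_m^2)^{-k-1/2}\le t_m^{-2k-1}$ with $\sum_m|a_m|t_m^{-2k-1}<\infty$, so the series is $O(1)$ and the factor $s=\sigma$ out front gives $F(\sigma,\alpha)=O(\sigma)$. The step I expect to be the main obstacle is the analytic-continuation bookkeeping in the second paragraph: the integral representation of $F$ genuinely fails once $\arg s>\pi/4$ (the $t$-integral diverges at $t=0$), so one must derive the closed-form series only in the sector where Fubini/Tonelli applies and then transport it to a punctured neighbourhood of each $\pm it_n$ using the independent holomorphy of $F$ on $\Re s>0$; fixing the correct branch of $(s^2+t_m^2)^{-k-1/2}$ and the uniform control of the tail near the imaginary axis are the remaining technical points.
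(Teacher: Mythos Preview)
Your approach is essentially the same as the paper's: the paper also derives the closed-form identity $F(s,\alpha)=A(\gamma)s\,(8\pi/c)^k\frac{\Gamma(k+\tfrac12)}{\sqrt\pi}\sum_{m\ge1}a_me^{-2\pi imd/c}(s^2+t_m^2)^{-k-1/2}$ by the subordination integral for $e^{-s\sqrt n}$, the modular transformation at $a/c$, and term-by-term integration, then extends to $\Re s>0$ by analytic continuation via Hecke's bound and reads off the asymptotics at $s=\sigma\pm it_n$ exactly as you describe. The only cosmetic difference is the choice of integration variable (the paper parametrizes so that $f$ is evaluated at $a/c+i/(cu)$ rather than $a/c+it/(2\pi)$); your discussion of why the integral representation is only valid in the sector $|\arg s|<\pi/4$ and how to transport the identity to all of $\Re s>0$ is a point the paper passes over more briefly.
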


    Omega results concerning the Fourier coefficients of modular forms are usually proved by studying the second moments. For instance, the theory of Rankin--Selberg integrals \cite{iwaniec_topics_1997} produces
    \begin{equation}
        \label{eqn-rs}
        \sum_{n\le x}|a_n|^2\asymp x^k,
    \end{equation}
    immediately producing $a_n=\Omega(n^{k-1\over2})$, which is essentially optimal in view of the Ramanujan--Petersson conjecture. Walker \cite{walker_sums_2018} applied this method to study the second moments of $S(x,0)$, thereby proving the $\alpha=0$ special case of \autoref{th-omega}.

    Our method is motivated by G. H. Hardy's investigations of the classical lattice-point problems \cite{hardy_expression_1915}\cite{hardy_dirichlets_1917}, in which it was proved that as $x\to+\infty$
    \begin{equation*}
        \sum_{n\le x}d(n)-[x\log x+(2\gamma-1)\log x]=\Omega_\pm(x^{\frac14}),
    \end{equation*}
    \begin{equation*}
        \sum_{n\le x}r_2(n)-\pi x=\Omega_\pm(x^{\frac14}),
    \end{equation*}
    where $d(n)$ is the divisor function and $r_2(n)$ is the number of ways to express $n$ as a sum of two squares. It should be noted that another means to yield Omega results is by appealing to the general theorems of Chandrasekharan and Narasimhan \cite{chandrasekharan_functional_1962}\cite{chandrasekharan_mean_1964}, but due to the generality of their arguments, more stringent assumptions have to be imposed on $a_n$ hein order to apply their theorems.

    The paper is organized in the following manner: In \autoref{sc-an-series}, we study the asymptotic behavior of $F(s,\alpha)$ and deduce \autoref{th-Fs}. In \autoref{sc-omega}, we applied the properties of $F(s)$ to prove \autoref{th-omega}. In \autoref{sc-omegapm}, we perform a more refined analysis and derive \autoref{sc-omegapm}. Finally, in \autoref{sc-apps}, we apply our results to Ramanujan $\tau$-function and sums of Fourier coefficients over arithmetic progressions.

    \section{The generating series $F(s)$}\label{sc-an-series}
    From now on, $\alpha=a/c$ for some integers $(a,c)=1$. Choose integers $b,d$ such that $ad-bc=1$ so $\gamma=\begin{pmatrix}a&b\\c&d\end{pmatrix}\in\Gamma$. Write
    \begin{equation*}
        \tau'=\frac ac+{iu^{-1}\over c},\quad\tau=-\frac dc+{iu\over c},
    \end{equation*}
    so that \eqref{eqn-modular} is equivalent to
    \begin{equation}
        \label{eqn-modular2}
        \begin{aligned}
        f(\tau')
        &=\sum_{n\ge1}a_ne^{2\pi ina/c}e^{-{2\pi n\over c}\cdot u^{-1}} \\
        &=A(\gamma)u^kf(\tau)=A(\gamma)\sum_{n\ge1}a_ne^{-2\pi ind/c}u^ke^{-{2\pi n\over c}\cdot u}.
        \end{aligned}
    \end{equation}

    When $s>0$, the integral identity
    \begin{equation}
        \label{eqn-esn}
        e^{-s\sqrt n}={s\over\sqrt{\pi}}\left(c\over8\pi\right)^{\frac12}\int_0^{+\infty}u^{-\frac12}e^{-\left(c\over8\pi\right)s^2u}e^{-{2\pi n\over c}\cdot u^{-1}}\mathrm du,
    \end{equation}
    transforms \eqref{eqn-Fs} into
    \begin{equation*}
        F(s,\alpha)={s\over\sqrt{\pi}}\left(c\over8\pi\right)^{\frac12}\int_0^{+\infty}u^{-\frac12}e^{-\left(c\over8\pi\right)s^2u}f\left(\frac ac+{iu^{-1}\over c}\right)\mathrm du.
    \end{equation*}
    Plugging in \eqref{eqn-modular2} gives
    \begin{equation*}
        F(s,\alpha)={A(\gamma)s\over\sqrt{\pi}}\left(c\over8\pi\right)^{\frac12}\sum_{n\ge1}a_ne^{-2\pi ind/c}\int_0^{+\infty}u^{k-\frac12}e^{-\left[\left(c\over8\pi\right)s^2+{2\pi n\over c}\right]u}\mathrm du.
    \end{equation*}
    Computing the remaining integral and simplifying yields the following identity:
    \begin{equation}
        \label{eqn-Fs2}
        F(s,\alpha)=A(\gamma)s\left(8\pi\over c\right)^k{\Gamma(k+\frac12)\over\sqrt{\pi}}\sum_{n\ge1}{a_ne^{-2\pi ind/c}\over(s^2+t_n^2)^{k+\frac12}}.
    \end{equation}
    Due to Hecke's bound $a_n=O(n^{\frac k2})$ and $t_n^2\asymp n$, the right-hand side converges in any compact subset $\Re(s)>0$, so \eqref{eqn-Fs2} is valid throughout the half plane $\Re(s)>0$ by analytic continuation.
    \begin{proof}[Proof of \autoref{th-Fs}]
        When $s\to0$, it follows from \eqref{eqn-Fs2} that $F(s,\alpha)/s$ is bounded, so $F(\sigma,\alpha)=O(\sigma)$ for $\sigma>0$. When $s=\sigma\pm it_n$, notice that
        \begin{equation*}
            s^2+t_n^2=(s+it_n)(s-it_n)\sim\pm 2it_n\sigma=2e^{\pm i\pi/2}\sigma,
        \end{equation*}
        so we have
        \begin{align*}
            F(s,\alpha)
            &=A(\gamma)(\pm it_n)\left(8\pi\over c\right)^k{\Gamma(k+\frac12)\over2^{k+\frac12}\sqrt{\pi}}\cdot a_ne^{-2\pi ind/c}e^{\mp(k+\frac12){\pi i\over2}}\sigma^{-k-\frac12}+o(\sigma^{-k-\frac12}) \\
            &=\underbrace{e^{\pm(\frac12-k){\pi i\over2}}(4\pi)^k{\Gamma(k+\frac12)\over\sqrt{2\pi}}}_{B^\pm}\cdot A(\gamma)a_ne^{-2\pi ind/c}c^{-k}\sigma^{-k-\frac12}+o(\sigma^{-k-\frac12}).
        \end{align*}
    \end{proof}
    \begin{remark}
        G. H. Hardy \cite{hardy_expression_1915} proved a corresponding identity for $r_2(n)$ and obtained a similar asymptotic result as $\Re(s)\to0^+$, but there are no analogs of \eqref{eqn-Fs2} in divisor problems \cite{hardy_dirichlets_1917}.
    \end{remark}
    \renewcommand{\proofname}{Proof}
    \section{Omega results for the magnitude of partial sums}\label{sc-omega}
    By partial summation, we can express $F(s,\alpha)$ as an integral of $S(x,\alpha)$:
    \begin{equation*}
        F(s,\alpha)=\int_0^{+\infty}e^{-sx^{\frac12}}\mathrm dS(x,\alpha)=\int_0^{+\infty}e^{-sy}\mathrm dS(y)=s\int_0^{+\infty}S(y)e^{-sy}\mathrm dy.
    \end{equation*}
    If $|S(x,\alpha)|<\varepsilon x^\theta$ when $x>x_0$, then whenever $0<\Re(s)=\sigma\le1$ and $\Im(s)$ is fixed, there is
    \begin{equation*}
        |F(s,\alpha)|<\varepsilon|s|\int_{x_0}^{+\infty}y^{2\theta}e^{-\sigma y}\mathrm dy+O_{x_0}(1)=\varepsilon|s|\Gamma(2\theta+1)\sigma^{-2\theta-1}+O_{x_0}(1),
    \end{equation*}
    which indicates that
    \begin{lemma}
        \label{lm-o}
        If $S(x,\alpha)=o(x^\theta)$ as $x\to+\infty$, then $F(\sigma+it,\alpha)=o(\sigma^{-2\theta-1})$ when $t\in\mathbb R\setminus\{0\}$ is fixed and $\sigma\to0^+$.
    \end{lemma}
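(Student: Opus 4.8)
The plan is to upgrade the crude bound displayed immediately before the lemma into a genuine little-$o$ estimate by a standard $\varepsilon$-splitting argument. Throughout, fix $t\in\mathbb R\setminus\{0\}$ and write $s=\sigma+it$ with $0<\sigma\le1$; I will use the partial summation identity $F(s,\alpha)=s\int_0^{+\infty}S(y^2,\alpha)e^{-sy}\,\mathrm dy$ established above, whose boundary terms vanish because $S(0,\alpha)=0$ and $S(y^2,\alpha)e^{-sy}\to0$ as $y\to\infty$ (Hecke's bound makes $S(y^2,\alpha)$ grow only polynomially). Taking absolute values,
\[
|F(s,\alpha)|\le|s|\int_0^{+\infty}|S(y^2,\alpha)|e^{-\sigma y}\,\mathrm dy.
\]

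First, given $\varepsilon>0$, the hypothesis $S(x,\alpha)=o(x^\theta)$ supplies a threshold $x_0$ with $|S(x,\alpha)|\le\varepsilon x^\theta$ for $x\ge x_0$; put $y_0=\sqrt{x_0}$. Next, split the last integral at $y_0$. On $[y_0,\infty)$ the integrand is at most $\varepsilon y^{2\theta}e^{-\sigma y}$, so that piece contributes $\le\varepsilon|s|\int_0^{+\infty}y^{2\theta}e^{-\sigma y}\,\mathrm dy=\varepsilon|s|\,\Gamma(2\theta+1)\,\sigma^{-2\theta-1}$ (using $2\theta+1>0$ for the convergence). On $[0,y_0]$ the function $S(\cdot,\alpha)$ is a finite step function, hence bounded, and that piece is therefore $\le|s|\,y_0\sup_{0\le x\le x_0}|S(x,\alpha)|=O_{x_0}(1)$ uniformly for $\sigma\in(0,1]$. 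Combining, $|F(s,\alpha)|\le\varepsilon|s|\,\Gamma(2\theta+1)\,\sigma^{-2\theta-1}+O_{x_0}(1)$.

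Finally, since $t\ne0$ is fixed, $|s|=\sqrt{\sigma^2+t^2}\to|t|$ as $\sigma\to0^+$, and $2\theta+1>0$ forces $\sigma^{-2\theta-1}\to+\infty$; multiplying the last inequality by $\sigma^{2\theta+1}$ and letting $\sigma\to0^+$ gives $\limsup_{\sigma\to0^+}\sigma^{2\theta+1}|F(\sigma+it,\alpha)|\le|t|\,\Gamma(2\theta+1)\,\varepsilon$. As $\varepsilon>0$ is arbitrary this limsup is $0$, which is exactly $F(\sigma+it,\alpha)=o(\sigma^{-2\theta-1})$.

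I expect no real obstacle here: the argument is routine Tauberian bookkeeping, and the only two points that require care are (i) keeping the $x_0$-dependence of the head integral separate from $\sigma$, and (ii) noting that the convergence of $\Gamma(2\theta+1)$ and the blow-up of $\sigma^{-2\theta-1}$ both require $2\theta+1>0$, i.e. $\theta>-1/2$ — harmless, since the lemma is only ever applied with $\theta=\frac k2-\frac14>0$.
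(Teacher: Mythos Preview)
Your proof is correct and follows exactly the approach the paper sketches in the display preceding the lemma: partial summation, an $\varepsilon$-split of the integral at the threshold coming from $S(x,\alpha)=o(x^\theta)$, the Gamma-integral bound on the tail, and an $O_{x_0}(1)$ head. You have simply supplied the details behind the paper's phrase ``which indicates that,'' including the final limsup step and the harmless caveat $\theta>-\tfrac12$.
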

    \begin{proof}[Proof of \autoref{th-omega}]
        By our assumption on $f$ in \autoref{sc-intro}, we can choose $n\in\mathbb N$ such that $a_n\ne0$, so setting $\theta=\frac k2-\frac14$ and $t=t_n$ in \autoref{lm-o} contradicts \autoref{th-Fs}. Thus, $S(x,\alpha)=\Omega(x^{\frac k2-\frac14})$ (i.e. the negation of $o(x^{\frac k2-\frac14})$).
    \end{proof}
    \section{Oscillations of real components of partial sums}\label{sc-omegapm}
    In this section, we only give a detailed proof of $\Re[S(x,\alpha)]=\Omega_+(x^{\frac k2-\frac14})$, the $\Omega_-$ theorem can be proved by formulating a symmetrical reasoning.

    If $\Re[S(x,\alpha)]<\varepsilon x^\theta$ when $x>x_0$ and define
    \begin{equation}
        \label{eqn-Gs}
        G(s)=s\int_0^{+\infty}\{\varepsilon y^{2\theta}-\Re[S(y^2)]\}e^{-sy}\mathrm dy,
    \end{equation}
    then whenever $0<\Re(s)=\sigma\le1$ and $\Im(s)$ is fixed, there is
    \begin{align*}
        |G(s)|
        &<|s|\int_{x_0}^{+\infty}[\varepsilon y^{2\theta}-\Re[S(y^2)]]e^{-\sigma y}\mathrm dy+O_{x_0}(1) \\
        &=\varepsilon|s|\int_0^{+\infty}y^{2\theta}e^{-\sigma y}\mathrm dy-|s|\Re\left[\int_0^{+\infty}S(y^2)e^{-\sigma y}\mathrm dy\right]+O_{x_0}(1). \\
        &=\varepsilon|s|\Gamma(2\theta+1)\sigma^{-2\theta-1}-|s|\sigma^{-1}\Re[F(\sigma,\alpha)]+O_{x_0}(1).
    \end{align*}
    By \autoref{th-Fs}, $F(\sigma,\alpha)=O(\sigma)$, so the second term is bounded as $\sigma\to0^+$, which indicates that
    \begin{lemma}
        \label{lm-op}
        If $\limsup_{x\to+\infty}\Re[S(x,\alpha)]x^{-\theta}\le 0$, then $G(\sigma+it)=o(\sigma^{-2\theta-1})$ whenever $t\in\mathbb R$ is fixed and $\sigma\to0^+$.
    \end{lemma}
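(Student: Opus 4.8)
The plan is to make precise the displayed chain of inequalities that precedes the statement. The first step is to unwind the hypothesis: $\limsup_{x\to+\infty}\Re[S(x,\alpha)]x^{-\theta}\le0$ means that for every $\varepsilon>0$ there is an $x_0=x_0(\varepsilon)$ with $\Re[S(x,\alpha)]<\varepsilon x^{\theta}$ for all $x>x_0$, and after the substitution $x=y^2$ this says exactly that the integrand $g(y):=\varepsilon y^{2\theta}-\Re[S(y^2,\alpha)]$ in \eqref{eqn-Gs} is nonnegative for $y>\sqrt{x_0}$. This one-sided positivity is the whole engine of the argument, and it is the only place where the hypothesis is used.

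Next I would split the defining integral for $G(s)$ at $y=\sqrt{x_0}$. On $[0,\sqrt{x_0}]$ the partial sum $S(y^2,\alpha)$ is bounded and $y^{2\theta}$ is integrable (here $2\theta+1>0$ because $k>0$), so that piece is $O_{x_0}(|s|)=O_{x_0}(1)$ when $0<\sigma\le1$ and $t=\Im(s)$ is fixed. On the tail $[\sqrt{x_0},+\infty)$ I would use $g\ge0$ to pull the modulus inside the integral and bound $|e^{-sy}|$ by $e^{-\sigma y}$; restoring the range to $[0,+\infty)$ costs another $O_{x_0}(1)$. What remains is a genuinely real integral, which I evaluate by the two identities $\int_0^{+\infty}y^{2\theta}e^{-\sigma y}\,dy=\Gamma(2\theta+1)\sigma^{-2\theta-1}$ and $\int_0^{+\infty}\Re[S(y^2,\alpha)]e^{-\sigma y}\,dy=\Re\int_0^{+\infty}S(y^2,\alpha)e^{-\sigma y}\,dy=\sigma^{-1}\Re[F(\sigma,\alpha)]$, the last step being the partial-summation representation of $F$ from \autoref{sc-omega} with $s=\sigma$. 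Collecting terms yields $|G(\sigma+it)|\le|\sigma+it|(\varepsilon\Gamma(2\theta+1)\sigma^{-2\theta-1}-\sigma^{-1}\Re[F(\sigma,\alpha)])+O_{x_0}(1)$.

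Finally I would invoke \autoref{th-Fs}: since $F(\sigma,\alpha)=O(\sigma)$ as $\sigma\to0^+$, the term $\sigma^{-1}\Re[F(\sigma,\alpha)]$ is bounded, so together with the $O_{x_0}(1)$ it is $o(\sigma^{-2\theta-1})$ (recall $\theta>0$, so this power blows up); and since $|\sigma+it|$ stays bounded, the surviving main term is $C_t\,\varepsilon\,\sigma^{-2\theta-1}$ with $C_t$ independent of $\varepsilon$. As $\varepsilon>0$ is arbitrary this gives $G(\sigma+it)=o(\sigma^{-2\theta-1})$. The step I expect to be the crux is the interchange of modulus and integral on the tail — harmless here only because the hypothesis makes $g$ eventually nonnegative; this asymmetry is what separates the present situation from \autoref{lm-o} (which ran off a two-sided $o$-bound) and is ultimately why the comparison with $F(\sigma\pm it_n)$ in the next section will yield an $\Omega_\pm$ rather than a plain $\Omega$ statement. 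It is also worth noting that $G$ in \eqref{eqn-Gs} itself depends on $\varepsilon$, so the little-$o$ in the conclusion should be read with leading coefficient proportional to $\varepsilon$.
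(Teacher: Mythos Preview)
Your proposal is correct and follows essentially the same route as the paper: split the integral at the threshold coming from the $\limsup$ hypothesis, use eventual nonnegativity of $\varepsilon y^{2\theta}-\Re[S(y^2,\alpha)]$ to replace $|e^{-sy}|$ by $e^{-\sigma y}$, evaluate the two resulting real integrals via the Gamma function and the identity $\int_0^\infty S(y^2,\alpha)e^{-\sigma y}\,dy=\sigma^{-1}F(\sigma,\alpha)$, and kill the $F(\sigma,\alpha)$ term with \autoref{th-Fs}. Your side remark that $G$ depends on $\varepsilon$ (so the $o$-conclusion really reads $|G_\varepsilon|\le C_t\,\varepsilon\,\sigma^{-2\theta-1}+O_{x_0(\varepsilon)}(1)$) is a fair caveat the paper glosses over, though it causes no trouble in the application to \autoref{th-omegapm}.
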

    \begin{proof}[Proof of \autoref{th-omegapm}]
        Let $F_1(s)=\frac12[F(s,\alpha)+\overline{F(\overline s,\alpha)}]$, so it follows from \eqref{eqn-Fs} and partial summation that when $\alpha=a/c$ for some $(a,c)=1$, there is
        \begin{equation}
            \label{eqn-F1s}
            F_1(s)=\sum_{n\ge1}\Re(a_ne^{2\pi ina/c})e^{-s\sqrt{n}}=s\int_0^{+\infty}\Re[S(y^2)]e^{-sy}\mathrm dy.
        \end{equation}
        When $a_n\ne0$ for some $n$, $s=\sigma+it_n$, and $\sigma\to0^+$, it follows from \autoref{th-Fs} that \eqref{eqn-F1s} becomes
        \begin{equation*}
            F_1(s)=\frac12[B^+A(\gamma)a_ne^{-2\pi ind/c}+\overline{B^-A(\gamma)a_ne^{-2\pi ind/c}}]c^{-k}\sigma^{-k-\frac12}+o(\sigma^{-\frac k2-\frac12})
        \end{equation*}
        According to the proof of \autoref{th-Fs}, $\overline{B^-}=B^+$, so
        \begin{equation}
            \label{eqn-F1s-asymp}
            F_1(s)=B^+\Re[A(\gamma)a_ne^{-2\pi ind/c}]c^{-k}\sigma^{-\frac k2-\frac12}+o(\sigma^{-k-\frac12}).
        \end{equation}
        Plugging \eqref{eqn-F1s} into \eqref{eqn-Gs}, we have
        \begin{align*}
            G(s)
            &=\varepsilon\Gamma(2\theta+1)s^{-2\theta}-F_1(s) \\
            &=B^+\Re[A(\gamma)a_ne^{-2\pi ind/c}]c^{-k}\sigma^{-\frac k2-\frac12}+o(\sigma^{-k-\frac12}).
        \end{align*}
        Due to our assumption, we can choose $n\in\mathbb N$ such that $\Re[A(\gamma)a_ne^{-2\pi ind/c}]\ne0$. Plugging $\theta=\frac k2-\frac14$ into \autoref{lm-op} leads to a contradiction, so we must have $\Re[S(x,\alpha)]=\Omega_+(x^{\frac k2-\frac14})$.
    \end{proof}
    \section{Examples and Applications}\label{sc-apps}
    \subsection{Ramanujan $\tau$-function}
    If $f(\tau)=\Delta(\tau)=\sum_{n\ge1}\tau(n)e^{2\pi in\tau}$, then $k=12$ and $A(\gamma)=1$, so it follows from \autoref{th-omega} that
    \begin{corollary}
        For any integer $(a,c)=1$, we have
        \begin{equation*}
            \sum_{n\le x}\tau(n)e^{2\pi ina/c}=\Omega(x^{6-\frac14}).
        \end{equation*}
    \end{corollary}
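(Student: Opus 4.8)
The plan is to obtain this corollary as an immediate instance of \autoref{th-omega}, so the only genuine task is to check that $\Delta$ satisfies the axioms for a holomorphic cusp form of weight $k$ laid down in \autoref{sc-intro} and to read off the correct value of $k$. First I would recall the standard facts about $\Delta(\tau)=e^{2\pi i\tau}\prod_{n\ge1}(1-e^{2\pi in\tau})^{24}$: it is holomorphic (indeed non-vanishing) on $\mathbb H$, extends holomorphically to $\infty$ with $\Delta(\infty)=0$, and has the Fourier expansion $\Delta(\tau)=\sum_{n\ge1}\tau(n)e^{2\pi in\tau}$ with $\tau(1)=1\ne0$. In particular not all the coefficients vanish, so the expansion hypothesis of \autoref{sc-intro} is met and $\Delta(\tau+1)=\Delta(\tau)$.

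Next I would match the classical transformation law of $\Delta$ to the normalization \eqref{eqn-modular}. The usual statement is $\Delta\!\left(\frac{a\tau+b}{c\tau+d}\right)=(c\tau+d)^{12}\Delta(\tau)$ for every $\begin{pmatrix}a&b\\c&d\end{pmatrix}\in\Gamma$ with $c>0$. Since $(-i)^{12}=\bigl((-i)^4\bigr)^3=1$, we have $[-i(c\tau+d)]^{12}=(c\tau+d)^{12}$, so this identity is exactly \eqref{eqn-modular} with weight $k=12>0$ and multiplier $A(\gamma)\equiv1$, which trivially has modulus unity. Hence $\Delta$ is a holomorphic cusp form of weight $12$ in the precise sense used throughout this paper.

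Finally, given coprime integers $a,c$ (with $c\ge1$, say), we have $\alpha=a/c\in\mathbb Q$, so \autoref{th-omega} applies with $f=\Delta$ and yields
\begin{equation*}
    \sum_{n\le x}\tau(n)e^{2\pi ina/c}=S(x,\alpha)=\Omega\bigl(x^{\frac k2-\frac14}\bigr)=\Omega\bigl(x^{6-\frac14}\bigr),
\end{equation*}
which is the claim; taking $c=1$ recovers $\sum_{n\le x}\tau(n)=\Omega(x^{6-1/4})$ as a special case. There is no real obstacle here: the only point that deserves a moment's care is the bookkeeping of the factor $-i$ in \eqref{eqn-modular}, which is what forces $A(\gamma)=1$ and thereby legitimizes substituting $k=12$ into the exponent $\tfrac k2-\tfrac14=6-\tfrac14$.
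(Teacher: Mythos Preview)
Your proof is correct and follows exactly the paper's approach: verify that $\Delta$ is a holomorphic cusp form of weight $k=12$ with multiplier $A(\gamma)\equiv1$, then invoke \autoref{th-omega} to obtain the exponent $\tfrac k2-\tfrac14=6-\tfrac14$. The only difference is that you spell out the verification (in particular the bookkeeping $(-i)^{12}=1$) more carefully than the paper does.
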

    As for the $\Omega_\pm$ result, observe that
    \begin{equation*}
        \Re[A(\gamma)\tau(n)e^{-2\pi ind/c}]=\tau(n)\cos\left(2\pi nd\over c\right),
    \end{equation*}
    so according to \autoref{th-omegapm}, we have
    \begin{corollary}
        For any integer $(a,c)=1$, if $c$ is odd or there exists some $n\in\mathbb N$ such that $\tau(n)\ne0$ and $na\not\equiv\pm\frac c2\pmod c$, then
        \begin{equation*}
            \sum_{n\le x}\tau(n)\cos\left(2\pi na\over c\right)=\Omega_\pm(x^{6-\frac14}).
        \end{equation*}
    \end{corollary}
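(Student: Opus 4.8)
The plan is to read off the statement from \autoref{th-omegapm} applied with $f=\Delta$. The preliminary task is to confirm that $\Delta$ sits inside the framework of \autoref{sc-intro}: it is a holomorphic cusp form of weight $k=12$ on $\Gamma$, and since $\Delta\!\left(\frac{a\tau+b}{c\tau+d}\right)=(c\tau+d)^{12}\Delta(\tau)$ while $(-i)^{12}=1$, the multiplier in \eqref{eqn-modular} is trivial, $A(\gamma)\equiv1$. The coefficients $a_n=\tau(n)$ are real, so the quantity controlling the hypothesis of \autoref{th-omegapm} is $\Re[A(\gamma)\tau(n)e^{-2\pi ind/c}]=\tau(n)\cos(2\pi nd/c)$. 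It therefore suffices to exhibit one $n\in\mathbb N$ with $\tau(n)\ne0$ and $\cos(2\pi nd/c)\ne0$; \autoref{th-omegapm} then delivers $\Re[S(x,a/c)]=\sum_{n\le x}\tau(n)\cos(2\pi na/c)=\Omega_\pm(x^{6-\frac14})$, which is the assertion.

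The second step is to pin down the exceptional set $\{n:\cos(2\pi nd/c)=0\}$. Rewriting the vanishing as $4nd\equiv c\pmod{2c}$ and using $\gcd(c,d)=1$ — forced by $ad-bc=1$ — one checks that the cosine can vanish only when $4\mid c$, and then exactly for $n$ in a single pair of residue classes modulo $c$. In particular, if $c$ is odd (indeed whenever $4\nmid c$), the cosine never vanishes, so $n=1$ works since $\tau(1)=1$, and the conclusion holds with no extra hypothesis. When $4\mid c$ I would translate that residue condition on $n$ into a condition on $a$ by reducing $d$ modulo $c$ via $ad\equiv1\pmod c$, and compare the result with the condition $na\not\equiv\pm\frac c2\pmod c$ appearing in the statement; the hypothesis then hands us an $n$ with $\tau(n)\ne0$ lying outside the exceptional classes, to which \autoref{th-omegapm} applies.

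All of the analytic content is packaged inside \autoref{th-omegapm}, so the only genuine work — and the step I expect to be fiddly rather than deep — is the congruence bookkeeping in the case $4\mid c$. One cannot bypass it by simply claiming $\tau(n)\ne0$ on a convenient residue class, since Lehmer's conjecture is open, which is why the existence hypothesis on $n$ is needed at all; the delicate part is to see that the obstruction $\cos(2\pi nd/c)=0$, which comes out of \autoref{th-omegapm} phrased through $d$, is precisely the one recorded in the statement phrased through $a$, and this requires carefully tracking the $2$-adic valuations of $c$, $n$ and $a$ alongside $ad\equiv1\pmod c$. Once that identification is made, the corollary follows at once.
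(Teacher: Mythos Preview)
Your approach is exactly the paper's: the entire ``proof'' there is the single observation $\Re[A(\gamma)\tau(n)e^{-2\pi ind/c}]=\tau(n)\cos(2\pi nd/c)$ followed by the words ``so according to \autoref{th-omegapm}'', with no congruence analysis whatsoever. You have in fact gone further than the paper by sketching that analysis and correctly isolating $4\mid c$ as the only case where the cosine can vanish.

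One warning about the bookkeeping you anticipate in that case. Working it out, $\cos(2\pi nd/c)=0$ forces $4\mid c$ and is then equivalent to $n\equiv\pm c/4\pmod c$; since $\gcd(a,c)=1$ makes $a$ odd, this is the same as $na\equiv\pm c/4\pmod c$. It is \emph{not} the condition $na\equiv\pm c/2\pmod c$ printed in the corollary (for $c=4$, $a=d=1$, the cosine vanishes exactly for odd $n$, whereas $na\equiv 2\pmod 4$ singles out $n\equiv 2\pmod 4$). So the identification you plan will not land on the stated hypothesis verbatim; this appears to be a slip in the corollary's formulation rather than a flaw in your method, and the paper's one-line proof does not check it either.
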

    \subsection{Sums over arithmetic progressions}
    Define
    \begin{equation*}
        S(x;q,h)=\sum_{\substack{n\le x\\n\equiv h\pmod q}}a_n.
    \end{equation*}
    Then from the properties of additive characters, we have
    \begin{corollary}
        \label{co-ap}
        For every $q\in\mathbb N$, there exists some $1\le h\le q$ such that $(h,q)=1$ and $S(x;q,h)=\Omega(x^{\frac k2-\frac14})$.
    \end{corollary}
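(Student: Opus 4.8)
The plan is to reduce the corollary to an $\Omega$-result for the sum of the $a_n$ with $(n,q)=1$, and then to attack that sum by the generating-series method of Sections~\ref{sc-an-series}--\ref{sc-omega}. First I would note that, by orthogonality of the additive characters modulo $q$, $\sum_{1\le h\le q,\,(h,q)=1}S(x;q,h)=\sum_{n\le x,\,(n,q)=1}a_n=:P(x)$, a sum of $\phi(q)$ terms, so $|P(x)|\le\phi(q)\max_{(h,q)=1}|S(x;q,h)|$. Hence it suffices to show $P(x)=\Omega(x^{\theta})$ with $\theta:=\frac k2-\frac14$: this forces $|S(x;q,h)|=\Omega(x^{\theta})$ for at least one $h$ coprime to $q$, because for finitely many sequences the $\limsup$ of their maximum is the maximum of their $\limsup$s.

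To prove $P(x)=\Omega(x^\theta)$ I would argue by contradiction: assume $P(x)=o(x^{\theta})$. Setting $\Phi(s):=\sum_{n\ge1,\,(n,q)=1}a_ne^{-s\sqrt n}=s\int_0^{+\infty}P(y^2)e^{-sy}\,\mathrm dy$, the partial-summation estimate behind \autoref{lm-o} gives $\Phi(\sigma+it)=o(\sigma^{-2\theta-1})$ for each fixed $t\ne0$ as $\sigma\to0^+$ (note $2\theta+1=k+\frac12$). On the other hand, detecting the condition $(n,q)=1$ by M\"obius inversion and then $e\mid n$ by the additive characters modulo $e$ yields the exact identity
\begin{equation*}
    \Phi(s)=\sum_{e\mid q}\mu(e)\sum_{n\,:\,e\mid n}a_ne^{-s\sqrt n}=\sum_{e\mid q}\frac{\mu(e)}{e}\sum_{j\bmod e}F\!\left(s,\frac je\right),
\end{equation*}
in which only the squarefree $e$ — equivalently the divisors of $q_0:=\prod_{p\mid q}p$ — survive.

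The crux is where to probe this identity. I would fix an integer $n_0$ with $(n_0,q)=1$ and $a_{n_0}\ne0$ (possible since $f\not\equiv0$) and evaluate along $s=\sigma+4\pi i\sqrt{n_0}/q_0$. Writing each $j/e$ in lowest terms as $j'/e'$ (so $e'\mid e\mid q_0$), \autoref{th-Fs}, together with the fact that the absolutely convergent series \eqref{eqn-Fs2} for $F(\,\cdot\,,j'/e')$ is regular at $\sigma+it$ as soon as $t\ne 4\pi\sqrt m/e'$ for every $m\in\mathbb N$, shows that $F(s,j/e)=O(1)$ as $\sigma\to0^+$ unless $4\pi\sqrt{n_0}/q_0=4\pi\sqrt m/e'$ for some $m$ with $a_m\ne0$; since $q_0$ is squarefree and $(n_0,q_0)=1$, this forces $e'=q_0$ and $m=n_0$, hence $e=q_0$ and $(j,q_0)=1$. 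For those terms, with $\gamma_j=\begin{pmatrix}j&b_j\\q_0&d_j\end{pmatrix}\in\Gamma$ (so $jd_j\equiv1\pmod{q_0}$), \autoref{th-Fs} gives $F(s,j/q_0)=[1+o(1)]B^+A(\gamma_j)a_{n_0}e^{-2\pi in_0d_j/q_0}q_0^{-k}\sigma^{-k-\frac12}$, and summing,
\begin{equation*}
    \Phi\!\left(\sigma+\frac{4\pi i\sqrt{n_0}}{q_0}\right)=[1+o(1)]\,\frac{\mu(q_0)}{q_0}\,B^+a_{n_0}q_0^{-k}\!\left(\sum_{j\in(\mathbb Z/q_0)^\times}\!\!A(\gamma_j)e^{-2\pi in_0d_j/q_0}\right)\sigma^{-k-\frac12}+O(1).
\end{equation*}
Because $A(\gamma_j)$ depends only on $j\bmod q_0$ and equals $i^k$ for forms on $\Gamma$, the inner sum is $i^k$ times the Ramanujan sum $c_{q_0}(n_0)=\mu(q_0)\ne0$ (again using that $q_0$ is squarefree and coprime to $n_0$). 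Since $\mu(q_0)^2=1$, $B^+\ne0$ and $a_{n_0}\ne0$, the right-hand side is $\asymp\sigma^{-2\theta-1}$, which contradicts $\Phi(\sigma+it)=o(\sigma^{-2\theta-1})$. Thus $P(x)=\Omega(x^\theta)$, and the reduction completes the proof.

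I expect the main obstacle to be precisely this last analysis: after the M\"obius sieving the singular behavior of $\Phi$ near the imaginary axis is a superposition of the singularities of all the additive twists $F(\,\cdot\,,j/e)$, and one must pick the frequency so that (i) at least one twist is genuinely singular there — the naive choice $4\pi\sqrt{n_0}/q$ is useless when $q$ is not squarefree, since $\mu(q)=0$ then kills the would-be main term, which is exactly why one resonates at $4\pi\sqrt{n_0}/q_0$ with the radical $q_0$ — and (ii) no cancellation occurs among the finitely many twists that do blow up; the squarefreeness of $q_0$ is precisely what keeps the governing Ramanujan sum $c_{q_0}(n_0)$ (and, for more general multiplier systems, the relevant Gauss sums) away from zero.
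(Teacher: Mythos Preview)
Your argument is correct in outline and does succeed, but it is far more elaborate than what the paper actually does. The paper's proof is a two-line reduction: one writes $S(x,a/q)=\sum_{1\le h\le q}e^{2\pi iha/q}S(x;q,h)$ by grouping $n$ according to its residue class modulo $q$, and then invokes \autoref{th-omega} directly --- if every $S(x;q,h)$ were $o(x^{\frac k2-\frac14})$, so would the left-hand side be, contradicting \autoref{th-omega}. No M\"obius sieve, no re-analysis of the generating series, no choice of resonant frequency is needed.

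What your longer route buys is the coprimality clause $(h,q)=1$. The paper's decomposition runs over \emph{all} residues $h\in\{1,\dots,q\}$, so its contradiction argument, as written, only yields some $h$ in that range satisfying the $\Omega$-bound, not necessarily one coprime to $q$. By summing only over $(h,q)=1$, sieving with M\"obius, and probing at $t=4\pi\sqrt{n_0}/q_0$ with the radical $q_0$ rather than $q$, you genuinely earn the stronger conclusion. Two points in your write-up deserve tightening, however. First, the existence of $n_0$ with $(n_0,q)=1$ and $a_{n_0}\ne0$ is not quite as immediate as ``$f\not\equiv0$'': it needs a short argument via the Hecke eigenbasis and multiplicity one to rule out that all coefficients at indices coprime to $q$ vanish. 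Second, the identification $A(\gamma_j)=i^k$ holds for the standard even-integer-weight forms on $\Gamma$ with trivial multiplier, but the paper's setup nominally allows nontrivial $A(\gamma)$ (as for powers of $\eta$), in which case your final sum is a twisted Kloosterman-type sum whose nonvanishing is no longer a Ramanujan-sum computation and requires its own justification.
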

    \begin{proof}
        For all integers $a,q\ge1$, there is
        \begin{align*}
            S(x,\alpha)
            &=\sum_{n\le x}a_ne^{2\pi ina/q}
            =\sum_{1\le r\le q}e^{2\pi ir/q}\sum_{\substack{n\le x\\rn\equiv a\pmod q}}a_n \\
            &=\sum_{1\le r\le q}e^{2\pi ir/q}S(x;q,h_r)\quad(rh_r\equiv1\pmod q).
        \end{align*}
        If $S(x;q,h)=o(x^{\frac k2-\frac14})$ for all $1\le h\le q$, then $S(x,\alpha)=o(x^{\frac k2-\frac14})$, which will contradict \autoref{th-omega}.
    \end{proof}
    By partial summation, \autoref{co-ap} can also be phrased using normalized Fourier coefficients:
    \begin{corollary}
        \label{co-ap2}
        Let $\hat a_n=a_n/n^{k-1\over2}$ be the normalized $n$'th Fourier coefficient of $f$. Then for every $q\in\mathbb N$, there exists some $1\le h\le q$ such that $(h,q)=1$ and
        \begin{equation*}
            \hat S(x;q,h)=\sum_{\substack{n\le x\\n\equiv h\pmod q}}\hat a_n=\Omega(x^{\frac14}).
        \end{equation*}
    \end{corollary}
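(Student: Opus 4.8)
The plan is to deduce this from \autoref{co-ap} by a single partial summation, transferring the $\Omega$-bound from $S(x;q,h)$ to $\hat S(x;q,h)$. Fix $q\in\mathbb N$ and let $h=h(q)$ be the residue class produced by \autoref{co-ap}, so that $(h,q)=1$ and $S(x;q,h)=\Omega(x^{\frac k2-\frac14})$; I claim the \emph{same} $h$ works here. Writing $a_n=\hat a_n n^{\frac{k-1}2}$ and setting $A(t)=\hat S(t;q,h)$, Abel summation gives
\[
S(x;q,h)=A(x)\,x^{\frac{k-1}2}-\frac{k-1}{2}\int_{1}^{x}A(t)\,t^{\frac{k-3}2}\,\mathrm dt+O(1),
\]
the $O(1)$ absorbing the contribution of the terms with $n\le 1$.

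I would then argue by contradiction: suppose $\hat S(x;q,h)=o(x^{\frac14})$. Given $\varepsilon>0$, choose $t_0$ with $|A(t)|\le\varepsilon t^{\frac14}$ for all $t\ge t_0$. Then the boundary term is at most $\varepsilon x^{\frac{k-1}2+\frac14}=\varepsilon x^{\frac k2-\frac14}$, and splitting the integral at $t_0$ the part over $[1,t_0]$ is $O_{t_0}(1)$ while, using $k>\tfrac12$, the tail satisfies
\[
\left|\int_{t_0}^{x}A(t)\,t^{\frac{k-3}2}\,\mathrm dt\right|\le\varepsilon\int_{t_0}^{x}t^{\frac{2k-5}4}\,\mathrm dt=O_k\!\left(\varepsilon\,x^{\frac{2k-1}4}\right)=O_k\!\left(\varepsilon\,x^{\frac k2-\frac14}\right).
\]
Hence $S(x;q,h)=O_k(\varepsilon\,x^{\frac k2-\frac14})+O_{t_0}(1)$; since $x^{\frac k2-\frac14}\to\infty$ the constant is absorbed for large $x$, and as $\varepsilon$ was arbitrary this forces $S(x;q,h)=o(x^{\frac k2-\frac14})$, contradicting \autoref{co-ap}. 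Therefore $\hat S(x;q,h)=\Omega(x^{\frac14})$, with $(h,q)=1$ inherited from the choice of $h$.

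I do not expect a genuine obstacle here: the whole argument is one application of Abel summation together with a crude tail estimate, resting on the elementary identity $\tfrac{k-1}2+\tfrac14=\tfrac k2-\tfrac14$. The one point worth noting is that one needs $\tfrac{2k-5}4>-1$, i.e. $k>\tfrac12$, so that the tail integral genuinely has order $x^{\frac k2-\frac14}$ and dominates the $t_0$-dependent constant; this holds with room to spare, since a holomorphic cusp form on $\Gamma$ with a period-$1$ Fourier expansion has weight $k\ge 12$. One could equivalently bound $\hat S(x;q,h)$ directly in terms of $S(t;q,h)$ and $\int_1^{x}S(t;q,h)\,t^{-\frac{k+1}2}\,\mathrm dt$, but the contrapositive formulation above reaches the stated $\Omega$-result most economically.
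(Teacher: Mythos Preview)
Your proposal is correct and follows exactly the approach the paper indicates: the paper merely states that \autoref{co-ap2} follows from \autoref{co-ap} ``by partial summation'', and you have supplied precisely that computation, with the same choice of $h$ and the natural contrapositive argument. Your observation about needing $k>\tfrac12$ for the tail integral to dominate is a fair caveat, though in the paper's context of holomorphic cusp forms on $\Gamma$ this is automatic.
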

    In the $O$-direction, L\"u \cite{lu_average_2009} proved that for all $q\ge1$, there is
    \begin{equation*}
        \sum_{1\le h\le q}|\hat S(x;q,h)|^2\ll x\log x,
    \end{equation*}
    where $\ll$ only depend on $f$. Combining this with \autoref{co-ap2}, we obtain an upper bound for $\tilde S(x;q,h)$ and $S(x;q,h)$ valid ``almost everywhere.''
    \begin{corollary}
        If $x^{\frac12+\varepsilon}\le q\le x$, then
        \begin{equation*}
            \tilde S(x;q,h)=O(x^{\frac14}),\quad S(x;q,h)=O(x^{\frac k2-\frac14})
        \end{equation*}
        for almost all $1\le h\le q$.
    \end{corollary}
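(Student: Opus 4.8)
The plan is to derive from L\"u's second-moment estimate a matching bound for the second moment of $S(x;q,h)$ itself, and then to obtain both assertions by one pigeonhole step. Note first that L\"u's inequality, with implied constant depending only on $f$, holds for every value of its length parameter: $\sum_{1\le h\le q}|\hat S(t;q,h)|^2\ll t\log t$ for all $t\ge2$ (here $\hat S(t;q,h)=\sum_{n\le t,\,n\equiv h\,(q)}\hat a_n$ is the normalized sum of \autoref{co-ap2}, written $\tilde S$ in the statement). Writing $a_n=\hat a_nn^{(k-1)/2}$ and summing by parts against $t\mapsto\hat S(t;q,h)$ gives
\begin{equation*}
    S(x;q,h)=x^{(k-1)/2}\hat S(x;q,h)-\frac{k-1}{2}\int_1^x t^{(k-3)/2}\hat S(t;q,h)\,\mathrm dt .
\end{equation*}
Squaring, using $(A-B)^2\ll A^2+B^2$, applying Cauchy--Schwarz to the integral to separate $t^{(k-3)/2}$ from $\hat S(t;q,h)^2$, summing over $h$, and inserting L\"u's bound at every scale $t\le x$, one obtains
\begin{equation*}
    \sum_{1\le h\le q}|S(x;q,h)|^2\ll x^{k-1}\cdot x\log x+x^{(k-1)/2}\int_1^x t^{(k-3)/2}\cdot t\log t\,\mathrm dt\ll x^k\log x .
\end{equation*}

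Now fix $\varepsilon>0$ and assume $x^{1/2+\varepsilon}\le q\le x$. If $|\hat S(x;q,h)|>x^{1/4}$ held for more than $M$ residues $h\in\{1,\dots,q\}$, then L\"u's bound at $t=x$ would force $Mx^{1/2}\ll x\log x$, so the set of such $h$ has size $\ll x^{1/2}\log x$; since $q\ge x^{1/2+\varepsilon}$ this is $o(q)$, which is the first assertion. Identically, if $|S(x;q,h)|>x^{k/2-1/4}$ for more than $M$ residues then $Mx^{k-1/2}\ll x^k\log x$, so the exceptional set is again $\ll x^{1/2}\log x=o(q)$, which is the second assertion.

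The one genuine subtlety is the middle step: for a fixed $h$ the pointwise bound $\hat S(t;q,h)=O(t^{1/4})$ is not available uniformly in $t$ from the quoted estimate, so one must not try to bound the integral of $\hat S(t;q,h)$ directly, but instead perform the partial summation inside the $L^2$-norm over $h$, where L\"u's inequality can be invoked afresh at each scale $t\le x$. Everything else is routine; the hypothesis $q\ge x^{1/2+\varepsilon}$ is precisely what renders the exceptional sets negligible, while $q\le x$ merely ensures that each residue class is a genuine progression rather than a single point.
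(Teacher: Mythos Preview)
Your argument is correct and is essentially the approach the paper has in mind. The paper states the corollary without proof, merely remarking that it follows from L\"u's second-moment bound; you have supplied the details, in particular the passage from $\hat S$ to $S$ via partial summation performed inside the $\ell^2$-sum over $h$, which the paper leaves implicit. Your observation that one cannot invoke a pointwise bound $\hat S(t;q,h)=O(t^{1/4})$ uniformly in $t$ for a fixed $h$, and must therefore square and sum over $h$ before integrating, is exactly the point that makes the argument work and is worth highlighting.
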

    This demonstrates that the exponents of our Omega bounds in \autoref{co-ap} and \autoref{co-ap2} are optimal.
    \subsection{Further discussions on $S(x;q,h)$}
    Although \autoref{co-ap} demonstrates that we are unable to reduce the exponent $\frac k2-\frac14$ in the bound of $S(x;q,h)$, it is not an effective result that allows us to determine which $h$ fulfills the Omega bound.

    From the orthogonality of additive characters, we have
    \begin{equation}
        \label{eqn-sxqh}
        \begin{aligned}
            S(x;q,h)
            &=\frac1q\sum_{1\le m\le q}e^{-2\pi ihm/q}S\left(x,\frac mq\right) \\
            &=\frac1q\sum_{c|q}\sum_{\substack{1\le a\le c\\(a,c)=1}}e^{-2\pi iha/c}S\left(x,\frac ac\right).
        \end{aligned}
    \end{equation}
    Similar to how we prove \autoref{th-omega}, define
    \begin{equation*}
        F(s;q,h)=\sum_{\substack{n\ge1\\n\equiv h\pmod q}}a_ne^{-s\sqrt n}=s\int_0^{+\infty}S(y^2;q,h)e^{-sy}\mathrm dy,
    \end{equation*}
    so it follows from \eqref{eqn-sxqh} that
    \begin{equation*}
        F(s;q,h)=\frac1q\sum_{c|q}\sum_{\substack{1\le a\le c\\(a,c)=1}}e^{-2\pi iha/c}F\left(x,\frac ac\right).
    \end{equation*}
    Let $b$ and $d$ be chosen such that $ad-bc=1$ and $\gamma=\begin{pmatrix}a&b\\c&d\end{pmatrix}\in\Gamma$. Then it follows from \autoref{th-Fs} that when $s=\sigma+it_n$ such that $a_n\ne0$, there is
    \begin{equation*}
        F(s;q,h)={B^+a_n\over q\sigma^{k+\frac12}}\sum_{c|q}{K_f(-h,-n;c)\over c^k}+o(\sigma^{-\frac k2-\frac12}),
    \end{equation*}
    where $K_f(h,n;c)$ is the Kloosterman sum associated with $f$:
    \begin{equation*}
        K_f(m,n;c)=\sum_{\substack{1\le a\le c\\(a,c)=1}}A(\gamma)e^{2\pi ima/c+2\pi ind/c}.
    \end{equation*}
    By adapting the arguments in \autoref{sc-omega}, we deduce
    \begin{corollary}
        For every $q\in\mathbb N$, if there exist $1\le h\le q$ and $n\in\mathbb N$ such that
        \begin{equation*}
            a_n\sum_{c|q}{K_f(-h,-n;c)\over c^k}\ne0,
        \end{equation*}
        then as $x\to+\infty$, we have
        \begin{equation*}
            S(x;q,h)=\Omega(x^{\frac k2-\frac14}),\quad\tilde S(x;q,h)=\Omega(x^{\frac14}).
        \end{equation*}
    \end{corollary}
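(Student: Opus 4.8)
The plan is to imitate the chain of reasoning that produced Theorem~\ref{th-omega} and Corollary~\ref{co-ap}, but now tracking the arithmetic data carried by the Kloosterman-type sums $K_f(-h,-n;c)$. The starting point is the decomposition \eqref{eqn-sxqh} of $S(x;q,h)$ into the twisted sums $S(x,a/c)$ over divisors $c\mid q$, which at the level of the Dirichlet-type series becomes the stated formula for $F(s;q,h)$. Applying Theorem~\ref{th-Fs} termwise along the line $s=\sigma+it_n$ (legitimate since there are only finitely many pairs $(c,a)$ with $c\mid q$ and $1\le a\le c$, and for each of them the contribution is either $[1+o(1)]B^+A(\gamma)a_ne^{-2\pi ind/c}c^{-k}\sigma^{-k-\frac12}$ or $O(1)$ according as $a_n\ne0$ or $a_n=0$), one collects the leading terms and recognizes the inner sum over $a$, after grouping by $\gamma=\begin{pmatrix}a&b\\c&d\end{pmatrix}$, as precisely $K_f(-h,-n;c)$. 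This yields the asymptotic
\[
    F(s;q,h)=\frac{B^+a_n}{q\,\sigma^{k+\frac12}}\sum_{c\mid q}\frac{K_f(-h,-n;c)}{c^k}+o\!\left(\sigma^{-k-\frac12}\right)
\]
whenever $a_n\ne0$ and $s=\sigma+it_n$ with $\sigma\to0^+$.

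Next I would set up the $\Omega$-argument exactly as in Section~\ref{sc-omega}. Define, by partial summation, the Laplace-type representation $F(s;q,h)=s\int_0^{+\infty}S(y^2;q,h)e^{-sy}\,\mathrm dy$, and suppose for contradiction that $S(x;q,h)=o(x^{\theta})$ with $\theta=\frac k2-\frac14$; the estimate underlying Lemma~\ref{lm-o} then gives $F(\sigma+it,q,h)=o(\sigma^{-2\theta-1})=o(\sigma^{-k+\frac12})$ for any fixed $t\ne0$. Choosing $t=t_n$ for an $n$ with $a_n\ne0$ and $a_n\sum_{c\mid q}K_f(-h,-n;c)c^{-k}\ne0$, this contradicts the displayed asymptotic for $F(s;q,h)$, since the right-hand side there is of exact order $\sigma^{-k-\frac12}$. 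Hence $S(x;q,h)=\Omega(x^{\frac k2-\frac14})$. The companion bound $\tilde S(x;q,h)=\Omega(x^{\frac14})$ follows from this by partial summation against $n^{-(k-1)/2}$, exactly as Corollary~\ref{co-ap2} is deduced from Corollary~\ref{co-ap}: writing $a_n=\hat a_n n^{(k-1)/2}$ and summing by parts converts an Omega bound of size $x^{\frac k2-\frac14}$ for $\sum a_n$ into one of size $x^{1/2\cdot(k-1)/2+\text{correction}}$—more precisely $x^{(\frac k2-\frac14)-\frac{k-1}{2}}=x^{\frac14}$—for $\sum\hat a_n$.

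The one genuinely delicate point is the interchange of the limit $\sigma\to0^+$ with the finite sum over $(c,a)$ and, within Theorem~\ref{th-Fs}, the fact that its asymptotics are stated for the line $\Re(s)>0$ approaching $\pm it_n$ for a \emph{specific} $n$; one must check that the error terms $o(\sigma^{-k-\frac12})$ coming from the different divisors $c$ are uniform enough to be absorbed into a single $o(\sigma^{-k-\frac12})$, which is immediate here because $q$ (hence the number of terms) is fixed. The remaining hypotheses—that the leading coefficient $a_n\sum_{c\mid q}K_f(-h,-n;c)c^{-k}$ is nonzero—are exactly what guarantees the contradiction does not degenerate, and this is assumed rather than proved, so no arithmetic input on nonvanishing of Kloosterman sums is needed. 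I would close by remarking (as the surrounding text already hints) that, unlike Corollary~\ref{co-ap}, this version is "effective" in that it pins down the pairs $(q,h)$ for which the oscillation is visible in terms of the vanishing or nonvanishing of a concrete finite sum.
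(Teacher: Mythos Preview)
Your proposal is correct and follows essentially the same route as the paper: derive the asymptotic for $F(s;q,h)$ from the decomposition \eqref{eqn-sxqh} together with \autoref{th-Fs}, then run the contrapositive Laplace-transform argument of \autoref{sc-omega} (\autoref{lm-o}) to force $S(x;q,h)=\Omega(x^{\frac k2-\frac14})$, and finally pass to $\tilde S(x;q,h)$ by partial summation as in \autoref{co-ap2}. One trivial slip: with $\theta=\tfrac k2-\tfrac14$ you have $2\theta+1=k+\tfrac12$, so the bound from \autoref{lm-o} should read $o(\sigma^{-k-\frac12})$, not $o(\sigma^{-k+\frac12})$; this is exactly what you need to contradict the main term.
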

    Although this result requires more conditions on $a_n$, it is considerably more effective than \autoref{co-ap} and \autoref{co-ap2}.
    %    Bibliographies can be prepared with BibTeX using amsplain,
    %    amsalpha, or (for "historical" overviews) natbib style.
    \bibliographystyle{amsplain}
    %    Insert the bibliography data here.
    \bibliography{refs}

\providecommand{\bysame}{\leavevmode\hbox to3em{\hrulefill}\thinspace}
\providecommand{\MR}{\relax\ifhmode\unskip\space\fi MR }
% \MRhref is called by the amsart/book/proc definition of \MR.
\providecommand{\MRhref}[2]{%
  \href{http://www.ams.org/mathscinet-getitem?mr=#1}{#2}
}
\providecommand{\href}[2]{#2}
\begin{thebibliography}{1}

\bibitem{chandrasekharan_functional_1962}
K.~Chandrasekharan and Raghavan Narasimhan, \emph{Functional {Equations} {With} {Multiple} {Gamma} {Factors} and the {Average} {Order} of {Arithmetical} {Functions}}, The Annals of Mathematics \textbf{76} (1962), no.~1, 93.

\bibitem{chandrasekharan_mean_1964}
\bysame, \emph{On the mean value of the error term for a class of arithmetical functions}, Acta Mathematica \textbf{112} (1964), no.~0, 41--67 (en).

\bibitem{hardy_expression_1915}
G.~H. Hardy, \emph{On the expression of a number as the sum of two squares}, Quarterly Journal of Mathematics \textbf{46} (1915), 263--283.

\bibitem{hardy_dirichlets_1917}
\bysame, \emph{On {Dirichlet}'s {Divisor} {Problem}}, Proceedings of the London Mathematical Society \textbf{15} (1917), no.~1, 1--25 (en).

\bibitem{iwaniec_topics_1997}
Henryk Iwaniec, \emph{Topics in classical automorphic forms}, Graduate studies in mathematics, no. v. 17, American Mathematical Society, Providence, R.I, 1997.

\bibitem{lu_average_2009}
Guangshi Lü, \emph{The average value of {Fourier} coefficients of cusp forms in arithmetic progressions}, Journal of Number Theory \textbf{129} (2009), no.~2, 488--494 (en).

\bibitem{walker_sums_2018}
Alexander~Weston Walker, \emph{Sums of {Fourier} {Coefficients} of {Modular} {Forms} and the {Gauss} {Circle} {Problem}}, {PhD} {Thesis}, Brown University, 2018.

\end{thebibliography}
\end{document}